\newtheorem{FTS}{Theorem}
\newtheorem{Bhat}[FTS]{Theorem}
\newtheorem{NSC}[FTS]{Theorem}
\newtheorem{NC}[FTS]{Theorem}
\newtheorem{PathUni}[FTS]{Theorem}
\newtheorem{FTSLS}[FTS]{Theorem}
\newtheorem{FT Cons Col}[FTS]{Theorem}
\newtheorem{LT FTS}[FTS]{Theorem}
\newtheorem{FT Conv}[BF]{Lemma}
\newtheorem{BF bound}[BF]{Lemma}
\newtheorem{BF grad NZ}[BF]{Lemma}
\newtheorem{BF grad}[BF]{Lemma}
\newtheorem{Sclr FT}[BF]{Lemma}
\DeclareMathOperator{\sign}{sign}
\DeclareMathOperator{\F}{\mathrm F}
\begin{document}

\title{\LARGE \bf New Results on Finite-Time Stability: Geometric Conditions and Finite-Time Controllers}
\author{Kunal Garg \and Dimitra Panagou 
\thanks{The authors are with the Department of Aerospace Engineering, University of Michigan, Ann Arbor, MI, USA; \texttt{\{kgarg, dpanagou\}@umich.edu}.}
\thanks{The authors would like to acknowledge the support of the NASA Grant NNX16AH81A.}}

\maketitle

\begin{abstract}
This paper presents novel controllers that yield finite-time stability for linear systems. We first present a sufficient condition for the origin of a scalar system to be finite-time stable. Then we present novel finite-time controllers based on vector fields and barrier functions to demonstrate the utility of this geometric condition. We also consider the general class of linear controllable systems, and present a continuous feedback control law to stabilize the system in finite time. Finally, we present simulation results for each of these cases, showing the efficacy of the designed control laws. 
\end{abstract}

\acrodef{wrt}[w.r.t.]{with respect to}
\acrodef{apf}[APF]{Artificial Potential Fields}
\acrodef{ges}[GES]{Globally Exponentially Stable}
\newcounter{mytempeqncnt}

\section{Introduction}
Finite Time Stability (FTS) has been a well-studied concept, motivated in part from a practical viewpoint due to properties such as achieving convergence in finite time, as well as exhibiting robustness \ac{wrt} disturbances \cite{ryan1991finite}. Classical optimal control theory provides several examples of systems that exhibit convergence to the equilibrium in finite time \cite{ryan1982optimal}. A well-known example is the double integrator with bang-bang time-optimal feedback control \cite{athans2013optimal}; these approaches typically involve solutions that render discontinuous system dynamics. The approach in \cite{coron1995stabilization} considers finite-time stabilization using time-varying feedback controllers. The authors in \cite{bhat2000finite} focus on continuous autonomous systems and present Lyapunov-like necessary and sufficient conditions for a system to exhibit  FTS, while in \cite{bhat2005geometric} they provide geometric conditions for homogeneous systems to exhibit FTS. \cite{haddad2008finite} extended the notion of finite-time stability from autonomous to time-varying dynamical systems, see also \cite{hui2009semistability}. The authors in \cite{haimo1986finite} provided necessary and sufficient geometric conditions for the finite-time stability of a scalar system, and used the structure of phase portraits for second order systems to develop a class of finite-time systems. In \cite{wang2010finite}, the authors presented a method to construct a finite-time consensus protocol. \cite{amato2010finite} presents necessary and sufficient conditions for FTS of linear, time-varying systems, as well as an output feedback controller that yields finite-time stability. \cite{hong2002finite} addresses the problem of FTS for small-time controllable systems. FTS has regained much attention in the recent few years as well; \cite{liu2016nonsmooth,li2015finite,wang2014distributed} present FTS results for neural-network systems, output feedback tracking and control of multi-agent systems, respectively. In \cite{wang2016fault}, the authors consider the problem of finite-time consensus and provide a method to bound the position and velocity errors to a small residual set in finite-time. In \cite{zheng2012finite}, the authors analyze the finite-time consensus problem for strongly connected graphs of heterogeneous systems. Other recent work includes \cite{li2016results,li2013robust}, in which finite-time stability is studied in hybrid systems framework.

In \cite{haimo1986finite}, the authors presented a sufficient geometric condition in terms of the integral of the multiplicative inverse of the system dynamics, evaluated between any initial point $p$ and the origin. In this paper, we present a necessary condition in terms of the derivative of the system dynamics evaluated at the origin, which is much easier to check than the former one. We also present a sufficient condition in terms of bounds on the system dynamics and utilize it to design finite-time controllers for different classes of systems. In addition, we consider a general class of linear controllable systems, whereas the aforementioned work considered a very special class of linear or nonlinear systems. 

In \cite{bhat1998continuous}, the authors considered the problem of finite-time stabilization of double integrator systems. In this paper, we prove that under the effect of our controller, the closed-loop trajectories of any linear controllable system would converge to the equilibrium point in finite-time. As case studies, we consider a nonholonomic system guided by a vector field, a single integrator system guided by a barrier-function based controller, and controllable LTI system stabilized at an arbitrary equilibrium point, and design finite-time controllers for each one of them. 

% \textbf{NEEDS A LITTLE BIT OF WORK}: Include references as pointed out by reviewers. A paragraph about contributions need to be added as well

The paper is organized as follows: Section \ref{Sec OV} presents an overview of the theory of FTS. In Section \ref{New GC} we present new geometric conditions to establish FTS for scalar systems. Section \ref{BF control} presents a finite-time Barrier function based controller for obstacle avoidance and convergence to the goal location. In Section \ref{Sec LS}, we present novel control laws for a class of linear controllable systems for FTS. Section \ref{Simulations} evaluates the performance of the proposed finite-time controllers via simulation results. Our conclusions and thoughts on future work are summarized in Section \ref{Conclusions}.
 
\section{Overview of Finite Time Stability}\label{Sec OV}
Let us consider the system: 
\begin{align}\label{ex sys}
\dot y = f(y(t)),
\end{align}
where $y\in \mathbb R$, $f: \mathbb R \rightarrow \mathbb R$ and $f(0)=0$. In \cite{bhat2000finite}, the authors define finite-time stability as follows: 
The origin is said to be a finite-time-stable equilibrium of \eqref{ex sys} if there exists an open neighborhood $\mathcal N \subset D$ of the origin and a function $T: \mathcal N \setminus\{0\} \rightarrow (0,\infty)$, called the settling-time function, such that the following statements hold:
\begin{enumerate}
    \item \textit{Finite-time convergence}: For every $x \in \mathcal N \setminus\{0\}$, $\psi^x$ is defined on $[0, T(x)), \psi^x(t) \in \mathcal N \setminus\{0\}$ for all $t \in [0, T(x))$, and $\lim_{t\to T(x)} \psi^x(t)=0$. Here, $\psi^x: [0, T(x))\rightarrow D$ is the unique right maximal solution of system \eqref{ex sys}.  
    \item \textit{Lyapunov stability}: For every open neighborhood $U_\epsilon$ of 0, there exists an open subset $U_\delta$ of $\mathcal N$ containing 0 such that, for every $x \in  U_\delta\setminus\{0\}, \; \psi^x(t) \in U_\epsilon$, for all $t \in [0, T(x))$.
\end{enumerate}
The origin is said to be a globally finite-time-stable equilibrium if it is a finite-time-stable equilibrium with $D = \mathcal N = \mathbb R^n$. The authors also presented Lyapunov like conditions for finite-time stability of system \eqref{ex sys}:
\begin{Bhat} \cite{bhat2000finite}\label{FTS Bhat}
Suppose there exists a continuous function $V$: $\mathcal{D} \rightarrow \mathbb{R}$ such that the following hold: \\
 (i) $V$ is positive definite \\
 (ii) There exist real numbers $c>0$ and $\alpha \in (0, 1)$ and an open neighborhood $\mathcal{V}\subseteq \mathcal{D}$ of the origin such that 
 \begin{align} \label{FTS Lyap}
     \dot V(y) + c(V(y))^\alpha \leq 0, \; y\in \mathcal{V}\setminus\{0\}.
 \end{align}
Then origin is finite-time stable equilibrium of \eqref{ex sys}. 
\end{Bhat}

\subsection{Notations}
We denote $\|\bm x\|$ the Euclidean norm $\|\bm x\|_2$ of vector $\bm x$, and $|x|$ the absolute value of the scalar $x$. The $\sign(x)$ function is defined as:
\begin{align}
    \sign(x) = \left\{
	\begin{array}{rc}
	- 1, & \hbox{$x < 0$;}\\
	0, & \hbox{$x = 0$;} \\
	1, & \hbox{$x > 0$.} \\
	\end{array}
	\right.
\end{align}
% For vector-valued argument $\bm x\in \mathbb R^n$, the function $\sign(\bm x) : \mathbb R^n \rightarrow \mathbb R^n$ is defined as: 
% \begin{align}\label{sign vect}
%     \sign(\bm x)  = \begin{bmatrix}\sign(x_1) \\ \sign(x_2) \\ \vdots \\ \sign(x_n)\end{bmatrix}.
% \end{align}
\section{New condition for Finite-time stability}\label{New GC}
\subsection{Geometric Conditions for FTS}
The authors in \cite{haimo1986finite} stated a geometric condition on the system dynamics for the equilibrium to be finite-time stable:
\begin{enumerate}
\item $yf(y) < 0$ for $y \in \mathcal N\setminus\{0\}$, and $yf(y) = 0$ when $y = 0$, and
\item $\int_{p}^{0} \frac{dy}{f(y)} < \infty$ for all $p\in \mathbb R$. 
\end{enumerate}
These conditions are not useful in practice as, in general, it is difficult to evaluate the integral $\int_{p}^{0} \frac{dy}{f(y)}$ for an arbitrary vector field $f(y)$. Hence, we present conditions which are easier to check. Note that these results follow immediately from \cite{haimo1986finite}. Before presenting our condition, we state the following well-known result

\begin{NC}\label{suf con}
If system \eqref{sys0} is finite time stable, then 
\begin{align}\label{h inf}
\left.\frac{\partial h(x)}{\partial x}\right \vert _{x = 0}  = -\infty    
\end{align}
\end{NC}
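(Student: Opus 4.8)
The plan is to argue by contraposition, leaning on the integral characterization of the settling time that the paper attributes to \cite{haimo1986finite}. Since \eqref{sys0} is scalar, on a punctured neighborhood of the origin where $x h(x)<0$ the unique solution is obtained by separation of variables, so the settling time from an initial condition $x_0>0$ is exactly
\begin{align*}
T(x_0) = \int_{x_0}^{0}\frac{dx}{h(x)} = \int_0^{x_0}\frac{dx}{|h(x)|},
\end{align*}
the last equality using $h(x)<0$ for $x>0$ (the case $x_0<0$ is symmetric). Finite-time stability is therefore equivalent to finiteness of this integral for all small $x_0$, which is precisely Haimo's condition~2. My target is to show that this finiteness forces $h$ to decay to zero \emph{slower} than any linear function near the origin, which is exactly what $\partial h/\partial x|_{x=0}=-\infty$ encodes.

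Next I would suppose, toward a contradiction, that \eqref{h inf} fails. Because the sign condition $x h(x)<0$ forces $h(x)/x<0$ for $x\neq 0$ near the origin, the quantity $\partial h/\partial x|_{x=0}=\lim_{x\to 0} h(x)/x$ (recall $h(0)=0$, so the difference quotient is $h(x)/x$) is nonpositive whenever it exists; hence its negation, "not $-\infty$", means it is finite. I would then translate this into a local linear bound: there exist $M<\infty$ and $\delta>0$ such that $|h(x)|\le M|x|$ for all $0<|x|<\delta$. This is the decisive analytic step, and it is where the main care is needed --- one must pass from a statement about the derivative (a limit of the quotient $h(x)/x$) to a bound that holds uniformly for \emph{every} small $x$, rather than merely along a sequence.

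With $|h(x)|\le M|x|$ in hand, the conclusion follows by comparison with the logarithmically divergent integral,
\begin{align*}
T(x_0) = \int_0^{x_0}\frac{dx}{|h(x)|}\ \ge\ \int_0^{x_0}\frac{dx}{M\,x} = +\infty,
\end{align*}
so the settling time is infinite and \eqref{sys0} cannot be finite-time stable, contradicting the hypothesis. Reversing the contraposition yields \eqref{h inf}.

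I expect the main obstacle to be the rigorous handling of the borderline and non-differentiable cases in the middle step. If $h$ is only assumed continuous, then "$\partial h/\partial x|_{x=0}$" should be read as the extended-real limit of $h(x)/x$, and I would either confirm that this limit exists under the sign condition or, more robustly, phrase the argument in terms of $\limsup_{x\to 0}|h(x)|/|x|<\infty$. The case $\lim_{x\to 0} h(x)/x = 0$ (a vector field vanishing faster than linearly) is automatically absorbed, since it only strengthens the lower bound on the integral. Finally, treating the sides $x>0$ and $x<0$ separately and appealing to the symmetry of the estimate completes the argument.
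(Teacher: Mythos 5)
Your strategy (contraposition plus the settling-time integral $T(x_0)=\int_0^{x_0}\frac{dx}{|h(x)|}$) is the natural one, and for what it is worth the paper never actually proves this theorem --- it just cites \cite{haimo1986finite} and moves on --- so your reconstruction is a reasonable guess at the intended argument. But there is a genuine gap at exactly the step you yourself flagged as decisive, and it cannot be closed, because the statement as literally written is false. The negation of $\lim_{x\to 0}h(x)/x=-\infty$ is \emph{not} ``the limit is finite''; it only gives a sequence $x_n\to 0$ along which $|h(x_n)|\le M|x_n|$, and a sequential bound does not force the integral to diverge. Concretely: let $h(x)=-\sqrt{x}$ for $x>0$ except on intervals $I_n$ of width $2\cdot 8^{-n}$ centered at $x_n=2^{-n}$, on which $h$ rises continuously (piecewise linearly, say) to the value $-x_n$ at the center, and extend $h$ oddly to $x<0$. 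Then $xh(x)<0$ for $x\neq 0$, the minimum of $|h|$ on $I_n$ is $2^{-n}$, and the settling time from $x_0$ is at most $\int_0^{x_0}x^{-1/2}\,dx+\sum_n 2\cdot 8^{-n}\cdot 2^{n}<\infty$, so the origin \emph{is} finite-time stable; yet $h(x_n)/x_n=-1$ for every $n$, so the difference quotient does not tend to $-\infty$. Hence no argument can legitimately pass from ``the derivative at $0$ is not $-\infty$'' to your uniform bound $|h(x)|\le M|x|$, and your first suggested repair (showing the limit of $h(x)/x$ must exist under the sign condition) is impossible for the same reason.

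What your comparison argument \emph{does} prove, correctly and cleanly, is the weaker claim that finite-time stability forces $\limsup_{x\to 0}|h(x)|/|x|=+\infty$, i.e.\ $h$ cannot be Lipschitz at the origin: if $|h(x)|\le M|x|$ held on a whole punctured neighborhood, then $T(x_0)\ge\int_0^{x_0}\frac{dx}{Mx}=+\infty$, contradicting finite-time convergence. This is your second suggested repair (phrasing the contrapositive hypothesis as $\limsup_{x\to 0}|h(x)|/|x|<\infty$), and it is evidently what the paper actually intends, since the paper's own gloss on the theorem is that ``$h(x)$ cannot be Lipschitz continuous at the origin.'' But you should state explicitly that this establishes a strictly weaker conclusion than the displayed equation \eqref{h inf}: it bounds the $\limsup$, not the two-sided limit, and by the example above the two-sided-limit version is simply not a true theorem. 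The same remark applies to the paper's statement itself, which would need to be reformulated in the non-Lipschitz ($\limsup$) form to be correct.
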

This has been pointed out by several authors, see for e.g. \cite{haimo1986finite}. Note that this is not a sufficient condition: take $x(t) = x_0e^{-t^2}$ as a counter example. This system goes to origin only as $t\rightarrow \infty$.
Taking its first and second time derivative, we get $\dot x (t) = -2tx_0e^{-t^2}$ and $\ddot x(t) = (-2+4t^2)x_0e^{-t^2}$. Now, condition \eqref{h inf} can be re-written in the following form:
\begin{align*}
    \left.\frac{d h(x)}{dx}\right\vert _{x = 0} =  \left.\frac{d^2 x(t)}{d t^2}\frac{d t}{d x(t)}\right\vert _{x = 0}
\end{align*}
For the above example, $x = 0$ at $t = \infty$. Using this and substituting the expressions of $\dot x(t)$ and $\ddot x(t)$, we get
\begin{align*}
    \left.\frac{d h(x)}{dx}\right\vert _{x = 0} =  \left.\frac{(-2+4t^2)x_0e^{-t^2}}{-2tx_0e^{-t^2}}\right\vert _{t = \infty} = \lim_{t\to\infty}-2t = -\infty.
\end{align*}
Hence, even though the system $x(t) = x_0e^{-t^2}$ is not finite-time stable, it satisfies \eqref{h inf}. This means that the vector field $h(x)$ cannot be Lipschitz continuous at the origin for system to be FTS. Now we present a sufficient condition for the origin of a scalar system to be finite-time stable:
\begin{NSC}\label{nec suf cond}
Consider the system:
\begin{align}
        \dot x & = h(x), \quad x \in D \subset \mathbb R \label{sys0},
\end{align}
such that $h(0) = 0$, and $x\; h(x) < 0 \quad \forall x \neq 0$, i.e., the origin is a stable equilibrium. Then the origin is finite time stable equilibrium for system \eqref{sys0} if : $\exists \; D\subset \mathbb R$ containing the origin such that $\forall x\in D$, 
\begin{align} \label{nec suf ineq}
\sign(x)h(x)  \leq -k|x|^\alpha, \; \mbox{where} \; k>0, \; 0<\alpha<1.    
\end{align}
\end{NSC}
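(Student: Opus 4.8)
The plan is to apply the Lyapunov criterion of Theorem~\ref{FTS Bhat} by constructing an explicit candidate function and showing that the hypothesis \eqref{nec suf ineq} forces the differential inequality \eqref{FTS Lyap} to hold. The natural candidate is $V(x) = \tfrac12 x^2$, which is continuous and positive definite on $D$, so condition (i) of Theorem~\ref{FTS Bhat} is immediate. It then remains to verify condition (ii), namely that $\dot V(x) + c\,(V(x))^{\beta} \leq 0$ for some $c>0$ and some exponent $\beta \in (0,1)$ on a punctured neighborhood of the origin.

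First I would compute $\dot V(x) = x\,\dot x = x\,h(x)$ along trajectories of \eqref{sys0}. The key observation is that $x\,h(x) = \sign(x)\,|x|\,h(x) = |x|\,\bigl(\sign(x)h(x)\bigr)$, so the hypothesis \eqref{nec suf ineq} gives directly
\begin{align}\label{Vdot bound}
\dot V(x) = |x|\,\bigl(\sign(x)h(x)\bigr) \leq |x|\,\bigl(-k|x|^{\alpha}\bigr) = -k\,|x|^{\alpha+1}.
\end{align}
Next I would rewrite the right-hand side in terms of $V$ itself: since $V = \tfrac12 x^2$ we have $|x| = (2V)^{1/2}$, hence $|x|^{\alpha+1} = (2V)^{(\alpha+1)/2}$, and \eqref{Vdot bound} becomes $\dot V(x) \leq -k\,2^{(\alpha+1)/2}\,V^{(\alpha+1)/2}$. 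Setting $\beta = (\alpha+1)/2$ and $c = k\,2^{(\alpha+1)/2}$ yields exactly $\dot V(x) + c\,V(x)^{\beta} \leq 0$. Because $0<\alpha<1$, the exponent satisfies $\beta = (\alpha+1)/2 \in (\tfrac12,1) \subset (0,1)$, so $\beta$ lies in the admissible range required by Theorem~\ref{FTS Bhat}. With both hypotheses of that theorem verified on $D\setminus\{0\}$, finite-time stability of the origin follows at once.

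The step requiring the most care is confirming that the exponent $\beta$ produced by the change of variables genuinely lands in $(0,1)$; this is where the restriction $\alpha<1$ in the statement is used, and it is what distinguishes finite-time convergence from merely asymptotic convergence. I would emphasize that if $\alpha$ were allowed to equal $1$, the bound would reduce to $\dot V \leq -c\,V$, giving only exponential (asymptotic) convergence, consistent with the earlier remark that $h$ cannot be Lipschitz at the origin for an FTS system. Beyond that, the argument is essentially a direct substitution; I would also note that stability in the Lyapunov sense, included in the hypothesis via $x\,h(x)<0$, is already subsumed by Theorem~\ref{FTS Bhat}, so no separate stability argument is needed.
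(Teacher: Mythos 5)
Your proof is correct and takes essentially the same route as the paper's own: the identical Lyapunov function $V(x)=\tfrac{1}{2}x^2$, the identical bound $\dot V = |x|\bigl(\sign(x)h(x)\bigr) \leq -k|x|^{1+\alpha}$, and the identical constants $\beta = \tfrac{1+\alpha}{2}$ and $c = k\,2^{\beta}$ before invoking Theorem~\ref{FTS Bhat}. No gaps; your added remarks on why $\beta\in(0,1)$ matters are a fine clarification but do not change the argument.
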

\begin{proof} Choose the candidate Lyapunov function $V(x) = \frac{1}{2}x^2$. Taking its time derivative along the trajectories of \eqref{sys0}, we obtain: 
\begin{align*}
 \dot V = x\; h(x) = |x|\sign(x)h(x).   
\end{align*}
Since $\sign(x) h(x) \leq -k|x|^\alpha, \; k>0, \; 0<\alpha<1$, we have: $\dot V \leq |x|(-k|x|^\alpha)$. Choosing $\beta = \frac{1+\alpha}{2}$ and $c = k2^\beta$, we get
\begin{align*}
 \dot V \leq -cV(x)^\beta    
\end{align*}
where $0<\beta <1$ and $c>0$. Hence, from Theorem \ref{FTS Bhat}, we get that the origin is finite-time stable. 

\end{proof}
%\begin{proof}
%This can be verified by taking the left and right derivative of the function at $0$. First, take the right derivative:
%\begin{align*}
%    \lim_{l\to0^+} \frac{h(l)-h(0)}{l} = \lim_{l\to0^+} \frac{h(l)}{l}  \leq \lim_{l\to0^+} \frac{-kl^\alpha}{l} = -\infty.
%\end{align*}
%Now, for the left-derivative, we note that for $l<0$, from \eqref{nec suf ineq}, $-h(l)\leq -k |l|^\alpha$. Using this, we get
%\begin{align*}
%    \lim_{l\to0^-} \frac{h(0)-h(l)}{l} = \lim_{l\to0^+} \frac{-h(l)}{l}  \leq \lim_{l\to0^+} \frac{-k|l|^\alpha}{l} = -\infty.
%\end{align*}
%This completes the proof. 
%\end{proof}
Note that this condition is not necessary but just sufficient. Example 2.3 in \cite{bhat2000finite} is a system with Finite-time stable origin but there exists no $k>0$ and $0<\alpha<1$ in any open neighborhood or origin $D\subset \mathbb R$ such that the inequality \eqref{nec suf ineq} holds. 
These conditions can be used to verify finite-time stability of scalar systems. Now we present some examples to demonstrate how these conditions can be utilized to design finite-time controllers.

\subsection{Example: Trajectory Tracking}\label{Sec 2 Path}
Consider a vehicle modeled under unicycle kinematics given by:
\begin{align} \label{uni-sys}
   \begin{bmatrix}\dot x\\ \dot y \\ \dot \theta \end{bmatrix}=\begin{bmatrix}u \cos\theta \\u  \sin\theta \\ \omega \end{bmatrix},
\end{align}
where $\bm{q}=\begin{bmatrix}\bm r^T&\theta\end{bmatrix}^T \in X \subset \mathbb R^3$ is the state vector of the vehicle, comprising the position vector $\bm{r}=\begin{bmatrix}x&y\end{bmatrix}^T$ and the orientation $\theta$ of the agent \ac{wrt} the global frame $\mathcal G$, $\bm u=\begin{bmatrix}u&\omega\end{bmatrix}^T \in \mathcal U \subset \mathbb R^2$ is the control input vector comprising the linear velocity $u$ and the angular velocity $\omega$ of the vehicle. The control objective is to track a trajectory given by $\bm r_g(t)$ which is continuously differentiable in its argument. We seek vector field based controller which can converge the trajectories of closed-loop system to the desired trajectory in finite-time. First we design a vector field to achieve this:
\begin{align}\label{F-traj}
    \mathbf F_p = - k\bm r_e(t)\|\bm r_e(t)\|^{\alpha-1} +  \dot{\bm r}_g(t) ,
\end{align}
where $k>0$, $0<\alpha<1$ and $\bm r_e(t) = \bm r(t) - \bm r_g(t)$. The control law is given by:
\begin{align}
    u &= \|\mathbf F_p\|, \label{path-u}\\
    \omega &= -k_\omega\sign\left(\theta-\varphi_p\right)|\theta-\varphi_p|^\alpha+\dot \varphi_p, \label{path-w}
\end{align}
where $\varphi_p\triangleq\arctan\left(\frac{\F_{py}}{\F_{px}}\right)$ is the orientation of the vector field $\mathbf F_p$. 

\begin{PathUni}\label{PathUni}
Under control law \eqref{path-u}-\eqref{path-w}, system \eqref{uni-sys} tracks the trajectory $\bm r_g(t)$ in finite time.
\end{PathUni}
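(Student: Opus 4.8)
The plan is to exploit the cascade structure created by the two feedback channels, handling the orientation loop and then the position loop. First I would introduce the orientation error $e_\theta \triangleq \theta - \varphi_p$ and compute its dynamics. Since $\dot\theta = \omega$ and the control \eqref{path-w} is constructed so that $\omega = -k_\omega\sign(\theta-\varphi_p)|\theta-\varphi_p|^\alpha + \dot\varphi_p$, the feedforward term $\dot\varphi_p$ cancels exactly, yielding the autonomous scalar system $\dot e_\theta = -k_\omega\sign(e_\theta)|e_\theta|^\alpha$. This is precisely the form required by Theorem \ref{nec suf cond}: indeed $\sign(e_\theta)\dot e_\theta = -k_\omega|e_\theta|^\alpha \le -k_\omega|e_\theta|^\alpha$ with $k_\omega>0$ and $\alpha\in(0,1)$, so $e_\theta$ reaches zero in some finite time $T_1$ and remains there, since $e_\theta=0$ is an equilibrium of its own dynamics.

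Next I would use the alignment achieved at $T_1$ to reduce the position loop to a scalar finite-time problem. For $t\ge T_1$ we have $\theta=\varphi_p$, and because $\varphi_p$ is by definition the orientation of $\mathbf F_p$, the heading vector satisfies $[\cos\theta,\ \sin\theta]^T = \mathbf F_p/\|\mathbf F_p\|$. Combined with $u = \|\mathbf F_p\|$ from \eqref{path-u}, the translational velocity becomes exactly $\dot{\bm r} = \mathbf F_p$. Substituting the vector-field definition \eqref{F-traj} and using $\bm r_e = \bm r - \bm r_g$, the feedforward $\dot{\bm r}_g$ cancels and the error dynamics collapse to $\dot{\bm r}_e = -k\bm r_e\|\bm r_e\|^{\alpha-1}$. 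Taking $V=\tfrac12\|\bm r_e\|^2$ gives $\dot V = -k\|\bm r_e\|^{\alpha+1} = -k\,2^{\beta}V^{\beta}$ with $\beta=(1+\alpha)/2\in(0,1)$, so Theorem \ref{FTS Bhat} yields convergence of $\bm r_e$ to zero in an additional finite time $T_2$, giving total tracking time $T_1+T_2<\infty$.

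I expect the delicate points to lie in justifying that the two loops genuinely decouple and that nothing escapes during the transient $[0,T_1]$. The orientation feedback relies on $\varphi_p$ and $\dot\varphi_p$ being well defined, which fails precisely where $\mathbf F_p=\bm 0$; I would therefore need either to assume (or argue) that $\mathbf F_p$ stays bounded away from zero along the closed-loop trajectory, or to treat separately the configurations where it vanishes, since there $\varphi_p$ is discontinuous and the feedforward term in $\omega$ is singular. Assuming $\dot{\bm r}_g$ and $\ddot{\bm r}_g$ bounded, I would also verify the absence of finite escape before alignment: from $\|\dot{\bm r}_e\| \le u + \|\dot{\bm r}_g\| \le k\|\bm r_e\|^\alpha + 2\|\dot{\bm r}_g\|$ together with $\alpha<1$, the error grows at most sublinearly, so $\bm r_e$ remains bounded on $[0,T_1]$ and the cascade argument is valid. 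The singularity of the heading at $\mathbf F_p=\bm 0$ is the one point where a fully rigorous treatment must either exclude the configuration by hypothesis or address the induced discontinuity in $\omega$.
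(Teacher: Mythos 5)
Your proof follows essentially the same cascade argument as the paper's: finite-time convergence of the orientation error $\theta-\varphi_p$ via a scalar finite-time argument (the feedforward $\dot\varphi_p$ cancelling exactly), followed by the observation that once aligned, $\dot{\bm r}=\mathbf F_p$ so the position error obeys $\dot{\bm r}_e=-k\bm r_e\|\bm r_e\|^{\alpha-1}$, which is finite-time stable by the Lyapunov theorem. If anything, your write-up is more careful than the paper's, which silently assumes $\mathbf F_p\neq\bm 0$ (so that $\varphi_p$ and $\dot\varphi_p$ are well defined) and never verifies that $\bm r_e$ stays bounded during the alignment transient $[0,T_1]$ --- both points you correctly identify as the delicate steps.
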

Before stating the main theorem, we present an intermediate result that is used also later in the paper:
\begin{FT Conv}\label{FT lemma}
Origin of the following system is a finite-time stable equilibrium:
\begin{align}\label{gen FTS}
    \dot{\bm x} =  -k\bm x\|\bm x\|^{\alpha-1} \quad k>0 \quad 0<\alpha<1. 
\end{align}
\end{FT Conv}
\begin{proof}
Consider the candidate Lyapunov function 
\begin{align*}
    V(\bm x) = \frac{1}{2}\|\bm x\|^2.
\end{align*}
The time derivative of this function along the system trajectories of \eqref{gen FTS} read:
\begin{align*}
    \dot V(\bm x) &= \bm x^T( -k\bm x\|\bm x\|^{\alpha-1}) = -k\|\bm x\|^{1+\alpha} = -cV(\bm x)^\beta,
\end{align*}
where $c = 2^{\frac{1+\alpha}{2}}k >0$ and $\beta = \frac{1+\alpha}{2} <1$. Hence, from Theorem \ref{FTS Bhat}, we get that the equilibrium point $\bm 0$ is finite-time stable. Note that for scalar case, right hand side simply reads $-kx|x|^{\alpha-1} = -k\sign(x)|x|^{\alpha}$.
\end{proof}
Now we prove Theorem \ref{PathUni}:

\begin{proof}
Consider error term $\bm r_e(t) = \bm r(t) - \bm r_g(t)$. Its time derivative reads $\dot{\bm r}_e(t) = \dot{\bm r}(t) - \dot{\bm r}_g(t) = \begin{bmatrix}u\cos\theta \\ u\sin\theta\end{bmatrix} - \dot{\bm r}_g(t)$. From Lemma \ref{FT lemma}, $\bm r_e(t)$ goes to origin in finite-time if $\dot{\bm r}_e(t) = -k\bm r_e\|\bm r_e\|^{\alpha-1}$ with $k>0$ and $0<\alpha<1$. Hence, we need that 
\begin{align}\label{des-cl}
\begin{bmatrix}u_d\cos\theta_d \\ u_d\sin\theta_d\end{bmatrix}  = -  k\bm r_e\|\bm r_e\|^{\alpha -1} + \dot{\bm r}_g(t)
\end{align}
where $u_d$ and $\theta_d$ denote the desired linear speed and orientation, respectively. Let $\angle(\cdot)$ denote signed angle. From \eqref{F-traj} and \eqref{des-cl}, we have that $\varphi_p = \angle\mathbf F_p = \theta_d$ and $u = u_d = \|\mathbf F_p\|$. Hence, if the system tracks vector field $\mathbf F_p$ in finite-time, it will track the desired trajectory $\bm r_g(t)$ in finite-time. Define $\theta_e = \theta - \theta_d$. Choose candidate Lyapunov function  $V(\theta_e) = \frac{1}{2}\theta_e^2$. Taking its time derivative along \eqref{path-w}, we get:
\begin{align*}
\dot V(\theta_e) & = \theta_e\dot{\theta}_e  = \theta_e(\dot \theta - \dot \theta_d) =  \theta_e(\omega-\dot \theta_d)  \\ 
& \overset{\eqref{path-w}}{=} \theta_e(-k_\omega\sign\left(\theta-\varphi_p\right)|\theta-\varphi_p|^\alpha+\dot \varphi_p - \dot\theta_d).
\end{align*}
Since $\theta_d = \varphi_p$, we get:
\begin{align*}
\dot V(\theta_e) & = \theta_e(-k_\omega\sign\left(\theta-\varphi_p\right)|\theta-\varphi_p|^\alpha) \\
& = -k_\omega\theta_e\sign(\theta_e)|\theta_e|^\alpha = -k_\omega|\theta_e|^{1+\alpha} \\
& = -k_\omega(2V(\theta_e))^{\frac{1+\alpha}{2}} \leq -cV(\theta_e)^\beta,
\end{align*}
where $c = k_\omega2^{\frac{1+\alpha}{2}}$ and $\beta = \frac{1+\alpha}{2}<1$. Hence, $\theta_e(t)$ converges to zero in finite time. This along with the fact that the magnitude of linear speed given out of \eqref{path-u} is equal to the desired linear speed $u_d$ implies that $\dot{\bm r}(t) = -k\bm r_e(t)\|\bm r_e(t)\|^{\alpha-1}$. Hence, $\bm r_e(t) \rightarrow \textbf{0}$ in finite time and which implies that the system trajectory $\bm r(t)$ converges to $\bm r_g(t)$ in finite-time. 
\end{proof}
Note that for the error dynamics of orientation $\theta_e$, from \eqref{path-w}, one can get 
\begin{align*}
&\dot \theta_e = -k_\omega\sign(\theta_e)|\theta_e|^\alpha = h(\theta_e) \\
\implies & \theta_eh(\theta_e) = -k_\omega|\theta_e|^{1+\alpha} \\
\implies & \sign(\theta_e)h(\theta_e) = -k_\omega|\theta_e|^\alpha
\end{align*}
Also, $ \frac{dh(\theta_e}{d \theta_e} = -k_\omega \sign(\theta_e)|\theta_e|^{\alpha-1} $ which implies $ \left. \frac{d h(\theta_e)}{d\theta_e}\right \vert _{\theta_e = 0} = -\infty $ since $\alpha<1$. Hence, both the conditions presented in Theorem \ref{nec suf cond} and \ref{suf con} are getting satisfied.

\section{Obstacle Avoidance Using Barrier Function Based Controller}\label{BF control}
Consider a vehicle modeled under single integrator dynamics as
\begin{align}\label{sin int}
\dot{\bm x} = \bm u,
\end{align}
where $\bm x, \bm u \in \mathbb R^n$. The problem of reaching to a specified goal location in finite time can be formulated mathematically as follows:
$$\exists t^*<\infty \ s.t \ \forall t\geq t^* \ \|\bm x(t) - \bm \tau\| = 0 $$ where  $\bm \tau$ is the desired goal location, while that of obstacle avoidance can be written as:
$$ \|\bm x(t) - \bm o\| \geq d_c \ \forall t\geq t_0,$$
where $\bm o$ represents the location of the obstacle and $t_0$ is the starting time. Here, we model the obstacle as circular discs of radius $\rho_o$. The vehicle is required to maintain a safe distance $d_m$ from the obstacle. Hence choosing $d_c = d_m + \rho_o$ implies that vehicle maintains the required minimum distance from the boundary of the obstacle if $\|\bm x - \bm o\| \geq d_c$. We assume that the obstacle is located in such a manner that $ \|\bm o- \bm \tau\| > 2d_c$ so that at the desired location is sufficiently far away from the obstacle. We also assume that agent starts sufficiently far away from the obstacle so that $\|\bm x(t_0)-\bm o\|>d_c$. We seek a continuous feedback-law $\bm u_i$ such that the system \eqref{sin int} reaches its goal location while maintaining safe distance from the obstacle. More specifically, we seek a Barrier-function based controller for this problem. First we define the Barrier function as follows:
\begin{align}\label{BF eq}
B (\bm x)= \frac{\|\bm x - \bm \tau\|^2}{\|\bm x-\bm o\|-d_c +\frac{1}{\epsilon}}, 
\end{align}
where $\epsilon \gg 1$ is a very large number. We choose the controller of the form:
\begin{equation}\label{u consens}
\bm u = -k_1\nabla B\|\nabla B(\bm x)\|^{\alpha-1},
\end{equation}
where $k_1 >0$ and $0<\alpha<1$. With this controller, we have the following result:
\begin{FT Cons Col}\label{BF FTS}
Under the control law \eqref{u consens}, the point $\bm x=\bm \tau$ is FTS equilibrium for the closed-loop system \eqref{sin int}, and the closed-loop system trajectories will remain safe \ac{wrt} the obstacle. 
\end{FT Cons Col}

Before presenting the proof, we present some useful Lemmas:
\begin{BF bound}\label{BF B}
In the domain $D_o = \{\bm x \ | \ \|\bm x -\bm o\| > d_c \}$, the Barrier function $B(\bm x)$ is bounded as $B(\bm x) \leq \epsilon\|\bm x-\bm\tau\|^2$.
\end{BF bound}
\begin{proof}
In the chosen domain:
\begin{align*}
   & \|\bm x -\bm o\| \geq d_c \implies \|\bm x -\bm o\| - d_c\geq 0 \\
    \implies &\|\bm x -\bm o\| - d_c + \frac{1}{\epsilon} \geq \frac{1}{\epsilon}  \\
    \implies &\frac{1}{\|\bm x -\bm o\| - d_c + \frac{1}{\epsilon}} \leq \epsilon \\
    \implies & B(\bm x) = \frac{\|\bm x-\bm \tau\|^2}{\|\bm x -\bm o\| - d_c + \frac{1}{\epsilon}} \leq \epsilon\|\bm x-\bm \tau\|^2.
\end{align*}
\end{proof}
\begin{BF grad NZ}\label{BF NZ}
Gradient of the Barrier function, $\nabla B(\bm x)$ is non-zero everywhere except the equilibrium point $\bm \tau$ and at \begin{align}
    \bm x = \bm \tau + 2\frac{\|\bm o-\bm \tau\| + d_c-\frac{1}{\epsilon}}{\|\bm o-\bm \tau\|}(\bm o-\bm \tau)
\end{align}
\end{BF grad NZ}
\begin{proof}
Define $x_o \triangleq (\|\bm x-\bm o\|-d_c +\frac{1}{\epsilon})$. Gradient of the Barrier function \eqref{BF eq} reads:
\begin{align*}
    \nabla B(\bm x) = & 2\frac{\bm x -\bm \tau}{x_o} - \frac{\|\bm x-\bm \tau\|^2}{x_o^2}\frac{\bm x - \bm o}{\|\bm x-\bm o\|} 
\end{align*}
Hence, $\nabla B(\bm x) = \bm 0$ implies:
\begin{align*}
    \frac{\bm x-\bm \tau}{\|\bm x-\bm \tau\|} = \frac{\|\bm x-\bm \tau\|}{2x_o}\frac{\bm x - \bm o}{\|\bm x-\bm o\|},
\end{align*}
which holds only if $\bm x-\bm \tau$ is along $\bm x-\bm o$ and $\|\bm x-\bm \tau\| = 2x_o$ since left hand side of the equation is a unit vector. Denote $\|\bm x-\bm \tau\| = a$, $\|\bm x-\bm o\| = b$ and $\|\bm o-\bm \tau\| = c$. Since $\bm x-\bm o$ and $\bm x-\bm \tau$ are co-linear and due to the assumption $\|\bm o-\bm \tau\| >d_c$, we get that $a = b+c$. From that, we get
\begin{align*}
    & a = b+c = 2x_0 = 2b -2d_c + \frac{2}{\epsilon} \\
    \implies & b = c + 2d_c -\frac{2}{\epsilon}\Rightarrow a = 2(c+d_c-\frac{1}{\epsilon})
\end{align*}
From this, we get that 
\begin{align*}
    \bm x - \bm \tau & = 2(c+d_c-\frac{1}{\epsilon})\frac{\bm o-\bm \tau}{\|\bm o-\bm \tau\|}.
\end{align*}
As $c = \|\bm o-\bm \tau\| = \|\bm o_{\bm \tau}\|$ we get 
\begin{align*}
    \bm x = \bm \tau + 2\frac{\|\bm o_{\bm \tau}\| + d_c-\frac{1}{\epsilon}}{\|\bm o_{\bm \tau}\|}\bm o_{\bm \tau}.
\end{align*}
\end{proof}

% \begin{figure}[h]
% 	\centering
% 	\includegraphics[width=0.8\columnwidth,clip]{BF_scenario.jpg}
% 	\caption{The working domain $\mathcal{D}$ and the excluded ray $\bar{\bm D}$}.
% 	\label{fig:BF Scene}
% \end{figure}

\begin{BF grad}\label{BF grad}
In any closed, compact domain $D \subset \mathbb R^n$ containing point $\bm \tau$ and excluding the region $\bar D = \{\bm x \ | \ \|\bm x - \bm p\| \ < r \quad \bm p = \bm \tau + \theta(\bm o-\bm \tau) \ , \ \theta \geq 1\}$, where $r$ is an arbitrary small positive number, the gradient of Barrier function $B(\bm x)$ can be bounded as 
\begin{align}
\|\nabla B\| \geq c\|\bm x-\bm \tau\|,
\end{align}
where $c>0$.
\end{BF grad}

\begin{proof}
From \eqref{BF eq}, it can be easily verified that $\nabla B(\bm \tau) = \bm 0$.  Choose $D_1 = \{\bm x \ | \ \|\bm x-\bm \tau\| <\Delta\}$, where $\Delta$ is a very small positive number. Choose domain $\tilde D = D\setminus D_1$. Recall that $D$ doesn't include the ray $\bar D$, so $\tilde D$ does not include the point as in Lemma \ref{BF NZ}. Hence, from Lemma \ref{BF NZ}, at any point $\bm x \in \tilde D$, $\nabla B(\bm x) \neq 0$ and since $\tilde D$ is a closed domain, we can find $c_1 = \min_{\bm x\in \tilde D} \frac{\|\nabla B(\bm x)\|}{\|\bm x-\bm \tau\|} >0 $. Therefore, we have that $\forall \; \bm x\in \tilde D$, $\|\nabla B\| \geq c_1\|\bm x-\bm \tau\|$.

Now, consider $D_2 = \{\bm x \ | \ \|\bm x-\bm \tau\| \leq \Delta\}$. In a very small neighborhood of $\bm \tau$, the Hessian matrix $\nabla^2 B(\bm x) \succ \bm 0$ (i.e. $\nabla^2 B$ is a positive definite matrix). Therefore, using the gradient inequality (First-order condition for convexity), we have that $\forall\bm x \in D_2$, 
\begin{align*}
    & B(\bm \tau) \geq B(x) + \nabla B(\bm x)^T(\bm \tau - \bm x) \\
    \implies & 0 \geq B(\bm x) - \nabla B(\bm x)^T(\bm x -\bm \tau) \\
    \implies & \nabla B(\bm x)^T(\bm x -\bm \tau) \geq B(\bm x).
\end{align*}
From \eqref{BF eq}, one can easily see that $B(\bm x)$ can be bounded as $B(\bm x) \geq c_2\|\bm x-\bm \tau\|^2$. Also, using Cauchy-Schwartz inequality, we have that $\nabla B(\bm x)^T(\bm x -\bm \tau) \leq \|\nabla B(\bm x)\|\|\bm x-\bm \tau\|$. Therefore, we have that 
\begin{align*}
    \|\nabla B(\bm x)\|\|\bm x-\bm \tau\| & \geq \nabla B(\bm x)^T(\bm x -\bm \tau)\\
    & \geq  B(\bm x) \geq c_2\|\bm x-\bm \tau\|^2 \\
    \implies \|\nabla B(\bm x)\| & \geq c_2\|\bm x-\bm \tau\|
\end{align*}
Since $D = \tilde D \bigcup D_2$, choosing $c = min\{c_1,c_2\}$ gives us the required result.
\end{proof}
Now we are ready to prove Theorem \ref{BF FTS}:

\begin{proof}
From Lemma \ref{BF NZ}, we have that $\nabla B(\bm x) = \bm 0$ at the equilibrium point $\bm \tau$ and at the point $\bm x = \bm \tau + \mu (\bm o-\bm \tau)$ where $\mu$ takes the value as per Lemma \ref{BF NZ}. Lets assume that the initial condition is such that $\bm x(t_0)$ doesn't lie on the ray $\bar D$ defined as per Lemma \ref{BF grad}. Consider the open domain around the goal location $D_o$ as defined in Lemma \ref{BF B}. Define $\mathcal D = D_o\setminus \bar D$ (see figure \ref{fig:BF Scene}. Since $\bar D$ is a closed domain and $D_o$ is open, domain $\mathcal{D}$ is an open domain around the equilibrium $\bm\tau$. 

\begin{figure}[h]
	\centering
	\includegraphics[width=0.8\columnwidth,clip]{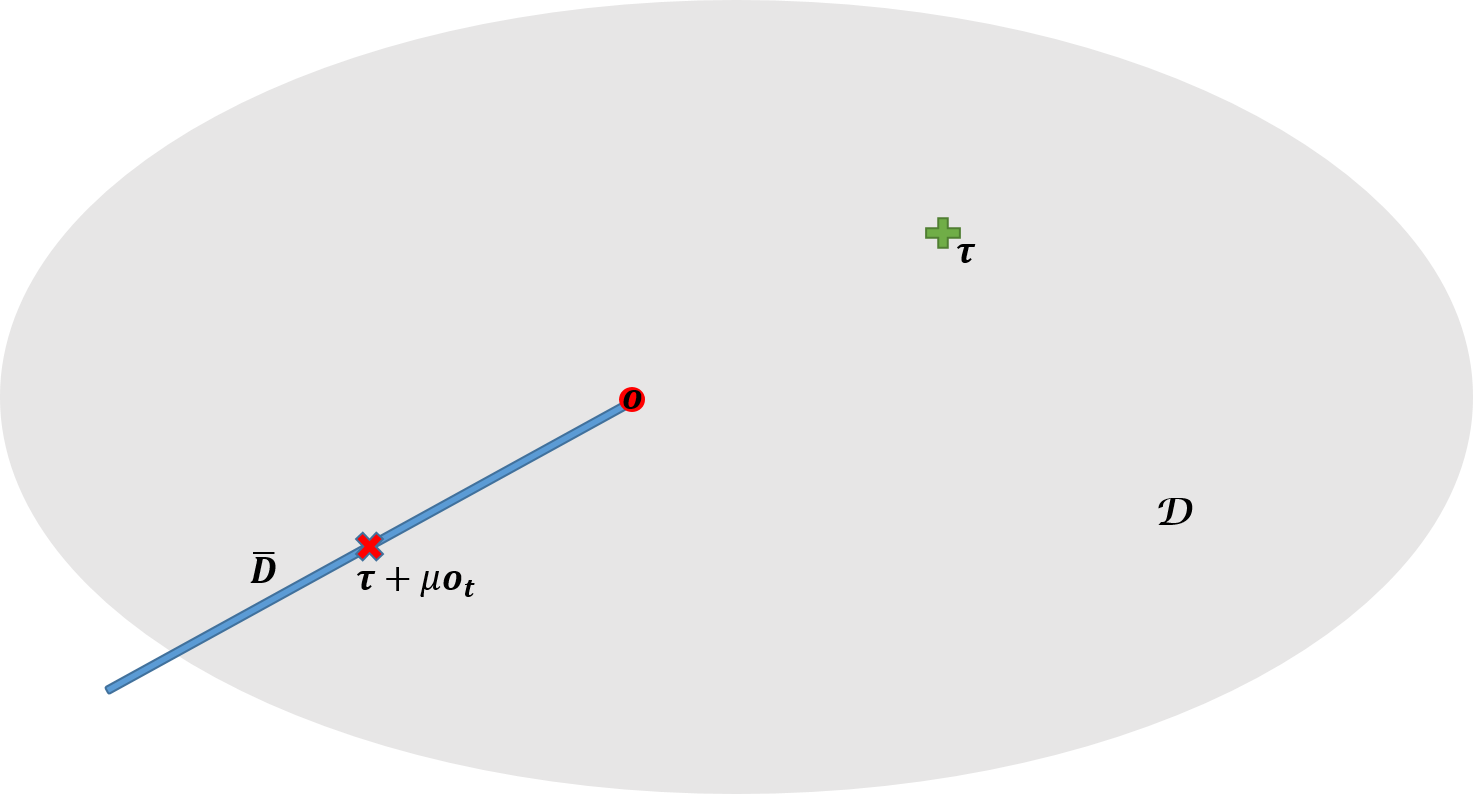}
	\caption{The working domain $\mathcal{D}$ and the excluded region $\bar D$.}
	\label{fig:BF Scene}
\end{figure}

Choose the candidate Lyapunov function $$V(\bm x) =  B(\bm x).$$ For simplifying the notation, define $\bm x_e = \bm x - \bm \tau$ and drop the argument $\bm x$ for the functions $B$ and $\nabla B$. Taking the time derivative of $V(\bm x)$ along the trajectories of \eqref{sin int}, we have:
\begin{align*}
  \dot V(\bm x) = &(\nabla B)^T(-k_1\nabla B\|\nabla B\|^{\alpha-1})\\
    = &-k_1\|\nabla B\|^{1+\alpha}
\end{align*}
From Lemma \ref{BF grad}, we have that $\|\nabla B\| \geq c\|\bm x_e\|$. Using this, we get:
\begin{align*}
\dot V \leq & -k_1c^{1+\alpha}\|\bm x\|^{1+\alpha}
\end{align*}
Using the result from Lemma \ref{BF B}, we have that $B \leq \epsilon\|\bm x_e\|^2$. Hence, we get:
\begin{align*}
    & V(\bm x) = B(\bm x) \leq \epsilon \|\bm x_e\|^2 \Rightarrow \\
     & -\|\bm x_e\|^2 \leq -\frac{1}{\epsilon}V(\bm x) \Rightarrow \\
     & -\|\bm x_e\|^{1+\alpha} \leq -\frac{1}{\epsilon^{\frac{1+\alpha}{2}}}V(\bm x)^{\frac{1+\alpha}{2}}
\end{align*}
Hence, we have that: $$\dot V(\bm x) \leq -k_1c^{1+\alpha}|\bm x_e\|^{1+\alpha}  \leq -KV(\bm x)^\beta, $$
for any $\bm x \in \mathcal D$, where $K = (\frac{1}{\epsilon})^\beta k_1c^{1+\alpha}$ and $\beta = \frac{1 + \alpha}{2}<1$. Thus, we have that the equilibrium $\bm x = \bm \tau$ is finite-time stable. Safety is trivial from the construction of Barrier function: since $\dot V(\bm x)$ is negative, $V(\bm x) = B(\bm x)$ would remain bounded and hence, the denominator of $B(\bm x)$ would have non-zero positive value. Choose $\epsilon$ greater than 1 over this minimum non-zero value would ensure $\|\bm x-\bm o\|-d_c>0$.
% It can be verified that $\bm x = \bm \tau + \mu(\bm o-\bm \tau)$ is a saddle point. Hence, none of the trajectories starting outside domain $D_0$ will get stuck at that point. Since the points excluded in the above analysis belong to a set of measure 0, we have that equilibrium $\bm \tau$ is almost globally FTS.
\end{proof}

\section{finite time stability of LTI systems}\label{Sec LS}
Consider the system 
\begin{align}\label{sys 2}
    \bm{\dot x} &= \bm{Ax} + \bm{B}\bm u, 
\end{align}
where $\bm x \in \mathbb R^n \quad \bm u \in U \subset \mathbb R^n$, $\bm A, \bm B \in \mathbb R^{n \times n}$. 
Objective is to stabilize the origin of \eqref{sys 2} in finite time. Mathematically, we seek a continuous feedback law so that conditions of Theorem \ref{FTS Bhat} are satisfied. 
\subsection{Multi-Input Case}

\begin{FTSLS} \label{FTS Lin Sys}
Assume $\bm B$ is full rank in \eqref{sys 2}. Then, the feedback control law 
\begin{align}\label{u-FTS}
\bm u(\bm x) = \bm K_1\bm x + \bm K_2\bm x_\alpha
\end{align}
where $\bm K_1$ is such that $\bm A+\bm B\bm K_1$  is Hurwitz, $\bm K_2 = -\bm B^{-1}$ and 
\begin{align}\label{x-a}
\bm x_\alpha = \bm x\|\bm x\|^{\alpha-1},
\end{align}
with $0<\alpha<1$ stabilizes the origin of closed-loop system \eqref{sys 2} in finite-time. 
\end{FTSLS}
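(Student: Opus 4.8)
The plan is to substitute the proposed feedback into the open-loop dynamics and exploit the fact that the choice $\bm K_2 = -\bm B^{-1}$ collapses the nonlinear term into a clean form. Substituting \eqref{u-FTS} into \eqref{sys 2} and using $\bm B\bm K_2 = \bm B(-\bm B^{-1}) = -\bm I$, the closed-loop system becomes
\[
\dot{\bm x} = (\bm A + \bm B\bm K_1)\bm x - \bm x\|\bm x\|^{\alpha-1} =: \bm A_c\bm x - \bm x_\alpha,
\]
where by construction $\bm A_c = \bm A+\bm B\bm K_1$ is Hurwitz. The full-rank hypothesis on $\bm B$ is used twice here: to guarantee that $\bm K_2 = -\bm B^{-1}$ is well defined, and to guarantee that a gain $\bm K_1$ rendering $\bm A_c$ Hurwitz exists (with $\bm B$ invertible, $\bm B\bm K_1$ can realize any matrix).

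The next step is to choose the Lyapunov function carefully. I would \emph{not} reuse $V = \frac{1}{2}\|\bm x\|^2$ as in the earlier lemmas, because a Hurwitz $\bm A_c$ does not imply $\bm x^T\bm A_c\bm x<0$ (its symmetric part may be indefinite). Instead I would invoke the standard Lyapunov theorem for stable linear systems: since $\bm A_c$ is Hurwitz, there exists $\bm P\succ 0$ solving $\bm A_c^T\bm P + \bm P\bm A_c = -\bm Q$ for some $\bm Q\succ 0$, and I set $V(\bm x)=\bm x^T\bm P\bm x$. Differentiating along the closed-loop trajectories, and using that $\bm x_\alpha$ is a positive scalar multiple of $\bm x$ so that $\bm x^T\bm P\bm x_\alpha = V(\bm x)\|\bm x\|^{\alpha-1}$, yields
\[
\dot V = -\bm x^T\bm Q\bm x - 2V(\bm x)\|\bm x\|^{\alpha-1} \leq -2V(\bm x)\|\bm x\|^{\alpha-1},
\]
where the quadratic term is discarded because $\bm Q\succ 0$ makes it nonpositive.

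It remains to recast the right-hand side into the $-cV^\beta$ form required by Theorem \ref{FTS Bhat}. Since $\alpha-1<0$, the map $t\mapsto t^{\alpha-1}$ is decreasing, so the bound $V\geq \lambda_{\min}(\bm P)\|\bm x\|^2$, i.e. $\|\bm x\| \leq (V/\lambda_{\min}(\bm P))^{1/2}$, gives the lower estimate $\|\bm x\|^{\alpha-1}\geq \lambda_{\min}(\bm P)^{(1-\alpha)/2}\,V^{(\alpha-1)/2}$. Substituting this produces $\dot V \leq -cV^{(1+\alpha)/2}$ with $c = 2\,\lambda_{\min}(\bm P)^{(1-\alpha)/2}>0$ and exponent $\beta = (1+\alpha)/2\in(1/2,1)$, so Theorem \ref{FTS Bhat} delivers finite-time stability of the origin.

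I expect the only genuine obstacle to be the choice of Lyapunov function: the naive quadratic $\|\bm x\|^2$ fails, and one must recognize that the Lyapunov-equation certificate $\bm x^T\bm P\bm x$ is the correct object, after which the scalar-multiple structure of $\bm x_\alpha$ and the eigenvalue bounds on $\bm P$ make the remaining manipulations routine. A minor secondary point worth noting is that all the inequalities above hold for every $\bm x$, so the argument in fact establishes \emph{global} finite-time stability, not merely a local conclusion.
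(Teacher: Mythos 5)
Your proposal is correct and follows essentially the same route as the paper: the same quadratic Lyapunov function $V(\bm x)=\bm x^T\bm P\bm x$ obtained from the Lyapunov equation for $\bm A+\bm B\bm K_1$, discarding the $-\bm x^T\bm Q\bm x$ term, and reducing to $\dot V \leq -cV^{(1+\alpha)/2}$ so that Theorem \ref{FTS Bhat} applies. The only difference is cosmetic: you use the exact identity $\bm x^T\bm P\bm x_\alpha = V(\bm x)\|\bm x\|^{\alpha-1}$ together with the single bound $V \geq \lambda_{\min}(\bm P)\|\bm x\|^2$, whereas the paper bounds the cross term via $\bm x^T\bm P\bm x_\alpha \geq \lambda_{\min}(\bm P)\|\bm x\|^{1+\alpha}$ and separately uses $V \leq \lambda_{\max}(\bm P)\|\bm x\|^2$; both give the same exponent $\beta=(1+\alpha)/2$ and differ only in the constant $c$.
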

\begin{proof}
Since $\bm B$ is full rank, ($\bm A,\bm B$) is controllable. Therefore, there exists a gain $\bm K_1 $ s.t. $Re(eig(\bm{A+BK}_1))<0$ i.e., there exists a positive definite matrix $\bm Q$ s.t. $(\bm{A+BK}_1)^T\bm P+ \bm{P(A+BK}_1) = -\bm Q$ where $\bm P$ is a positive definite matrix. Choose candidate Lyapunov function $V(x) = \bm x^T\textbf{P}\bm x$. Taking time derivative of $V(x)$ along the closed-loop trajectories of system \eqref{sys 2}, we get:
\begin{align*}
    \dot V(\bm x) & = \bm{x^TP}(\bm{Ax+B}\bm u) + \bm{(Ax+B}\bm u)^T\bm{Px}\\
    & = \bm{x^T\bm P(Ax+BK}_1\bm x + \bm {BK}_2\bm x_\alpha) \\
    & + (\bm{Ax+BK}_1\bm x + \bm{BK}_2\bm x_\alpha)^T\bm{Px}\\
    &= \bm x^T(\bm P\tilde{\bm A}+\tilde{\bm A}^T\bm P)\bm x + \bm x^T\bm P\bm B\bm K_2\bm x_\alpha \\
    & + \bm x_\alpha^T\bm K_2^T\bm B^T\bm P\bm x
    % & = -\bm{x^TQx} + \bm x^T(\bm{PBK}_2+\bm K_2^T\bm B^T\bm P)\bm x_\alpha 
\end{align*}
where $\tilde{\bm A} = \bm A+\bm B\bm K_1$. Choose $\bm K_2 = -\bm B^{-1}$. Hence we have that $(\bm{PBK}_2+\bm K_2^T\bm B^T\bm P) = -2\bm P$.
Therefore
\begin{align*}
    \dot V(\bm x) & = -\bm x^T\bm Q\bm x -2\bm x^T\bm P\bm x_\alpha \leq -2\bm x^T\bm P\bm x_\alpha
\end{align*}
Now, for any symmetric matrix $\bm P$, $\bm x^T\bm P\bm y$ can be bounded as $\lambda_{min}(\bm P)\bm x^T\bm y\leq \bm x^T\bm P\bm y \leq \lambda_{max}(\bm P)\bm x^T\bm y$ if $\bm x^T\bm y>0$. From \eqref{x-a}, we have that $\bm x^T\bm x_\alpha>0$ for all $\bm x\neq 0$. Hence, we have that $\bm x^T\bm P\bm x_\alpha \geq \lambda_{min}(\bm P)\bm x^T\bm x_\alpha$. Also, we can bound $V(x)$ by 
\begin{align*}
    &\lambda_{min}(\bm P)\|\bm x\|^2 \leq  \bm x^T\bm P\bm x \leq \lambda_{max}(\bm P)\|\bm x\|^2 \\
    &\implies V(\bm x)  \leq \lambda_{max}(\bm P)\|\bm x\|^2 \\
    & \implies V(\bm x)^\beta  \leq k \|\bm x\|^{2\beta}  =   k \|\bm x\|^{1+\alpha}
\end{align*}
where $k = (\lambda_{max}(\bm P))^\beta>0$ and $\beta = \frac{1+\alpha}{2}<1$. Hence
\begin{align*}
    \dot V(\bm x) &\leq -2\bm x^T\bm P\bm x_\alpha \leq -2\lambda_{min}(P)\bm x^T\bm x_\alpha \\
    & = -2\lambda_{min}(\bm P) \bm x^T\bm x\|\bm x\|^{\alpha-1} = -2\lambda_{min}(\bm P) \|\bm x\|^{1+\alpha} \\
    &\leq -2\lambda_{min}(\bm P)\frac{V(\bm x)^\beta}{k} \leq -cV(x)^\beta \\
    \implies \dot V(\bm x) &+ cV(x)^\beta \leq 0 
\end{align*}
where $c = \frac{2\lambda_{min}(\bm P)}{k}> 0$ and $0<\beta<1$ since $0<\alpha<1$. Therefore, from Theorem \ref{FTS Bhat}, we have that the origin is finite-time stable for closed-loop system.
\end{proof}

This was a restrictive case since we assumed matrix $\mathbf B$ to be full rank. Before we present the most general case, we state a result that we would require:
\begin{Sclr FT}\label{Lemma FT}
Consider the scalar system 
\begin{align}
    \dot x = ax + bu \quad b\neq 0.
\end{align}
$x$ converges to any $C^1$ trajectory given by $x^d(t) \in \mathbb R$ in finite time with control law
\begin{align}\label{scrl u}
    u = \frac{1}{b}(-ax -k\sign(x-x^d)|x-x^d|^\alpha + \dot x^d(t)),
\end{align}
where $k>0$ and $0< \alpha <1$.  
\end{Sclr FT}

\begin{proof}
Choose candidate Lyapunov function $V(x) = \frac{1}{2}(x-x^d)^2$. The time derivative of this function along the closed loop trajectory reads
\begin{align*}
    \dot V(x) & = (x-x^d)(\dot x-\dot x^d) \\
    & = (x-x^d)(ax + bu-\dot x^d) \\
    & \overset{\eqref{scrl u}}{=} (x-x^d)(-k\sign(x-x^d)|x-x^d|^\alpha)\\
    & = -k |x-x^d|^{1+\alpha} = -cV(x)^\beta,
\end{align*}
where $c = k2^\frac{1+\alpha}{2}$ and $\beta = \frac{1+\alpha}{2} <1$. Hence, from Theorem \ref{FTS Bhat}, we get that $x(t)-x^d(t)\rightarrow 0$ in finite time. 
\end{proof}
Now we present the most general case of linear controllable systems and show that any controllable system can be controlled in finite time.

\subsection{Linear Controllable System}\label{LCS}
% With the above result in hand, we can extend the concept of finite-time stability to any linear controllable system.
Since the system is assumed to be controllable, we start with controllable canonical form:
\begin{align}\label{lin-sys}
    \dot{\bm x} = \bm A\bm x+\bm Bu,
\end{align}
where $\bm x\in \mathbb R^n$ and $u\in \mathbb R$ and system matrices have form $$\bm A = \begin{bmatrix}0 & 1 & 0 &\dots & 0 & 0\\ 0& 0& 1& 0& \cdots &0 \\ \vdots & \vdots &  & \ddots & & \vdots\\ 0 & 0 & \cdots &0& 1 & 0 \\ 0 & 0& 0 &\cdots & 0 &  1 \\ a_1 & a_2 & a_3 & \cdots & \cdots & a_n \end{bmatrix} \quad \bm B = \begin{bmatrix}0 \\ 0 \\ \vdots \\ 0 \\ 0 \\ 1\end{bmatrix}.$$ Let $\bm x^d = \begin{bmatrix}x^d_1 & 0 &\cdots & 0 \end{bmatrix}^T$ be the desired state. We propose a continuous, state-feedback control law $u$ so that $\exists \quad T<\infty$ such that $\forall t\geq T$, $\bm x(t) = \bm x^d$: 
\begin{LT FTS}
System \eqref{lin-sys} reaches the desired state $\bm x^d$ in finite with the control law
\begin{align}\label{LTI FTC}
    u = \dot x_n^d -\sum_{i = 1}^{n}a_ix_i -k_n\sign(x_n-x^d_n)|x_n-x^d_n|^\alpha,
\end{align}
where $k_i>0$, $x_i^d$ is given out of \eqref{xid exp} and $\frac{n-1}{n} <\alpha <1$. Furthermore, the controller $u$ remains bounded. 
\end{LT FTS}
\begin{proof}

System \eqref{lin-sys} can be re-written in the following form: 
\begin{align}\label{n-int}
\begin{bmatrix}\dot x_1\\ \vdots \\ \dot x_{n-1} \\ \dot x_n \end{bmatrix}=\begin{bmatrix}x_2 \\ \vdots \\ x_n \\ \sum_{i=1}^{n}a_ix_i + u\end{bmatrix}.
\end{align}
For $x_1 \rightarrow x^d_1$ in finite-time, the desired time-rate of $x_1$, i.e. desired $x_2$ (denoted as $x^d_2$) should be 
\begin{align*}
x_2^d = -k_1\sign(x_1-x^d_1)|x_1-x^d_1|^\alpha + \dot x^d_1,
\end{align*}
where $k_1>0$ and $0<\alpha<1$ (see Lemma \ref{Lemma FT}). As we assume $x^d_1$ to be constant, we have $\dot x^d_1 = 0$. Similarly, in general form, one car write:
\begin{align}\label{xid exp}
x^d_{i+1} = -k_{i}\sign(x_{i}-x^d_i)|x_{i}-x^d_i|^\alpha + \dot x^d_i,
\end{align}
where $1\leq i\leq n-1$ and $k_i>0$. 
% Since we analyze the stability of origin, we need to shift the origin to the new equilibrium points for each state. Define error term $x_i^e = x_i-x^d_i$. Hence, the error dynamics for $i^{th}$ state reads:
% \begin{align*}
% \dot x_i^e = \dot x_i - \dot x_i^d = x_{i+1} - \dot x_i^d,
% \end{align*}
% for $1\leq i\leq n-1$ while last equation reads: 
% \begin{align*}
% \dot x_n^e = \dot x_n - \dot x_n^d = \sum_{i = 1}^{n}a_ix_i + u - \dot x_n^d,
% \end{align*}
% where control law $u$ is given out of \eqref{LTI FTC}. 
Choose candidate Lyapunov function 
\begin{align*}
    V(\bm x) = \sum\limits_{i = 1}^{n}\frac{1}{2}(x_i-x^d_i)^2
\end{align*}
Taking the time derivative of the candidate Lyapunov function along the closed loop system trajectory with controller \eqref{LTI FTC}, we get
\begin{align*}
    \dot V(\bm x) & = \sum\limits_{i = 1}^{n-1}(x_i-x^d_i)(x_{i+1}-\dot x^d_i) \\
    & + (x_n-x^d_n)(u + \sum\limits_{j = 1}^n a_jx_j-\dot x^d_n)\\
    & = \sum\limits_{i = 1}^{n-1}(x_i-x^d_i)(x_{i+1}-x^d_{i+1}+ x_{i+1}^d-\dot x^d_i) \\
    & + (x_n-x^d_n)(u + \sum\limits_{j = 1}^n a_jx_j-\dot x^d_n)\\
    & = \sum\limits_{i = 1}^{n-1}(x_i-x^d_i)(x_{i+1}- x^d_{i+1})\\
    & + \sum\limits_{i = 1}^{n-1}(x_i-x^d_i)(-k_i\sign(x_i-x^d_i)|x_i-x^d_i|^\alpha) \\
    & + (x_n-x^d_n)(-k_n\sign(x_n-x^d_n)|x_n-x^d_n|^\alpha)\\
    = & \sum\limits_{i = 1}^{n-1}(x_i-x^d_i)(x_{i+1}- x^d_{i+1})+ \sum\limits_{i = 1}^{n}-k_i|x_i-x^d_i|^{\alpha+1}\\
    & \leq V(\bm x) -\sum\limits_{i = 1}^{n}k_i|x_i-x^d_i|^{\alpha+1},
\end{align*}
as $\sum\limits_{i = 1}^{n-1}(x_i-x^d_i)(x_{i+1} \leq \frac{1}{2}\|\bm x_e\|^2 = V(\bm x)$. Define $\bar k = \min \limits_i k_i$ so that we get
\begin{align*}
    \dot V(\bm x) & \leq V(\bm x) -\bar k\sum\limits_{i = 1}^{n}|x_i-x^d_i|^{\alpha+1} = V(\bm x) -\bar k \|\bm x_e\|_{1+\alpha}^{1+\alpha},
\end{align*}
where $\bm x_e \triangleq \begin{bmatrix} x_1-x_1^d & x_2-x_2^d & \dots & x_n-x_n^d \end{bmatrix}^T$ and $\|\bm x_e\|_{1+\alpha}^{1+\alpha}$ is $(1+\alpha)-$norm of vector $\bm x_e$ raised to power $(1+\alpha)$.  Using the norm inequality for equivalent norms, we have that $\|\bm x_e\|_2\leq \|\bm x_e\|_{1+\alpha}$ since $1+\alpha<2$. From this,we get
\begin{align*}
    \dot V(\bm x)\leq  & V(\bm x)-\bar k \|\bm x_e\|_2^{1+\alpha} = V(\bm x)-\bar k (\|\bm x_e\|_2^2)^\frac{1+\alpha}{2}\\
    & \implies \dot V(\bm x) \leq V(\bm x) -cV(\bm x)^\beta,
\end{align*}
where $\beta = \frac{1+\alpha}{2} < 1$ and $c = \bar k 2^\beta$. From \cite{shen2008semi}, we get that in domain $\Omega = \{\bm x \; | \; V(\bm x)^{1-\alpha} < c\}$, the equilibrium point $\bm x^d$ is finite-time stable. Now, in order to be able to stabilize the system in finite-time from any given initial condition, we choose the control gains such that $k_i > \frac{V(\bm x(0))^{1-\alpha}}{2^\beta}$ for all $i$ so that $\bar k> \frac{V(\bm x(0))^{1-\alpha}}{2^\beta}$ and $ V(\bm x(0))^{1-\alpha} < c$. Since $\dot V(\bm x)\leq 0$, we get that $V(\bm x)^{1-\alpha}\leq V(\bm x(0))^{1-\alpha}\leq c$ and hence the closed-loop system would be finite time stable.  Furthermore, with $\alpha >\frac{n-1}{n}$, it can be verified that the controller \eqref{LTI FTC} remain bounded: from \eqref{xid exp}, $\dot x_n^d = -k_{n-1}|x_i-x_d|^{2\alpha-1} + \ddot x_{n-1}^d$. Define $(v)^{(k)}$ as the $k-th$ time derivative of $v$, so that we get $$(x_{i+1}^d)^{
n-i} = -k_i|x_i-x_i^d|^{(n-i+1)\alpha -(n-i)} + (x_i^d)^{n-i+1}.$$ 
Each $(x_{i+1}^d)^{
n-i}$ is bounded if $\alpha >\frac{n-i}{n-i+1}$ and $(x_i^d)^{n-i+2}$ is bounded. $(x_2^d)^{
n-1} = -k_{1}|x_1-x_1^d|^{n\alpha -(n-1)}$ is bounded if $\alpha >\frac{n-1}{n}$. Hence, with this choice of $\alpha$, all the derivatives $(x_i^d)^{(n-i+2)}$ and the controller remain bounded.
\end{proof}
This shows that any controllable LTI system can be controlled (and trivially, stabilized to origin) in finite-time from any initial condition. 
% Finite-time stability of individual scalar systems can be verified using candidate Lyapunov function $V(x_i^e) = \frac{1}{2}(x^e_i)^2$, starting from $n^{th}$ state. Since all of the controllers and virtual controllers are finite-time, the overall closed-loop system is also finite-time stable. 

\section{Simulations}\label{Simulations}
\subsection{Simulation results for Section \ref{Sec 2 Path}}
We consider a sinusoidal trajectory as the desired trajectory, i.e. $\bm r_g(t) = \begin{bmatrix}t & \cos(t)\end{bmatrix}^T$.  Figure \ref{fig:Err Sec2} shows the errors or deviations of coordinates $x(t)$ and $y(t)$ from the desired coordinates $x_g(t)$ and $y_g(t)$. Figure \ref{fig:Traj Sec 2} shows the actual and desired trajectory for the closed-loop system. It can be seen from the figures that trajectory $\bm r(t) = (x(t),y(t))$ converges to the desired trajectory $\bm r_g(t)$ in finite time. 
\begin{figure}[h]
	\centering
	\includegraphics[width=1\columnwidth,clip]{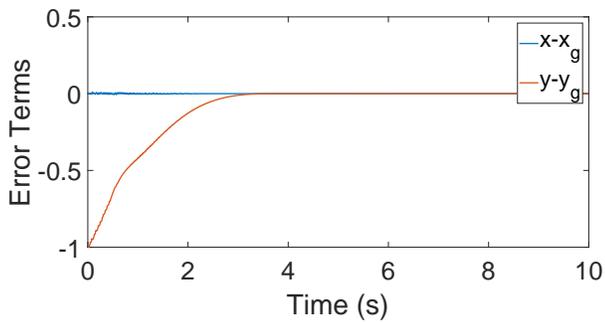}
	\caption{Deviation from Desired Trajectory with time.}
	\label{fig:Err Sec2}
\end{figure}

\begin{figure}[h]
	\centering
	\includegraphics[width=1\columnwidth,clip]{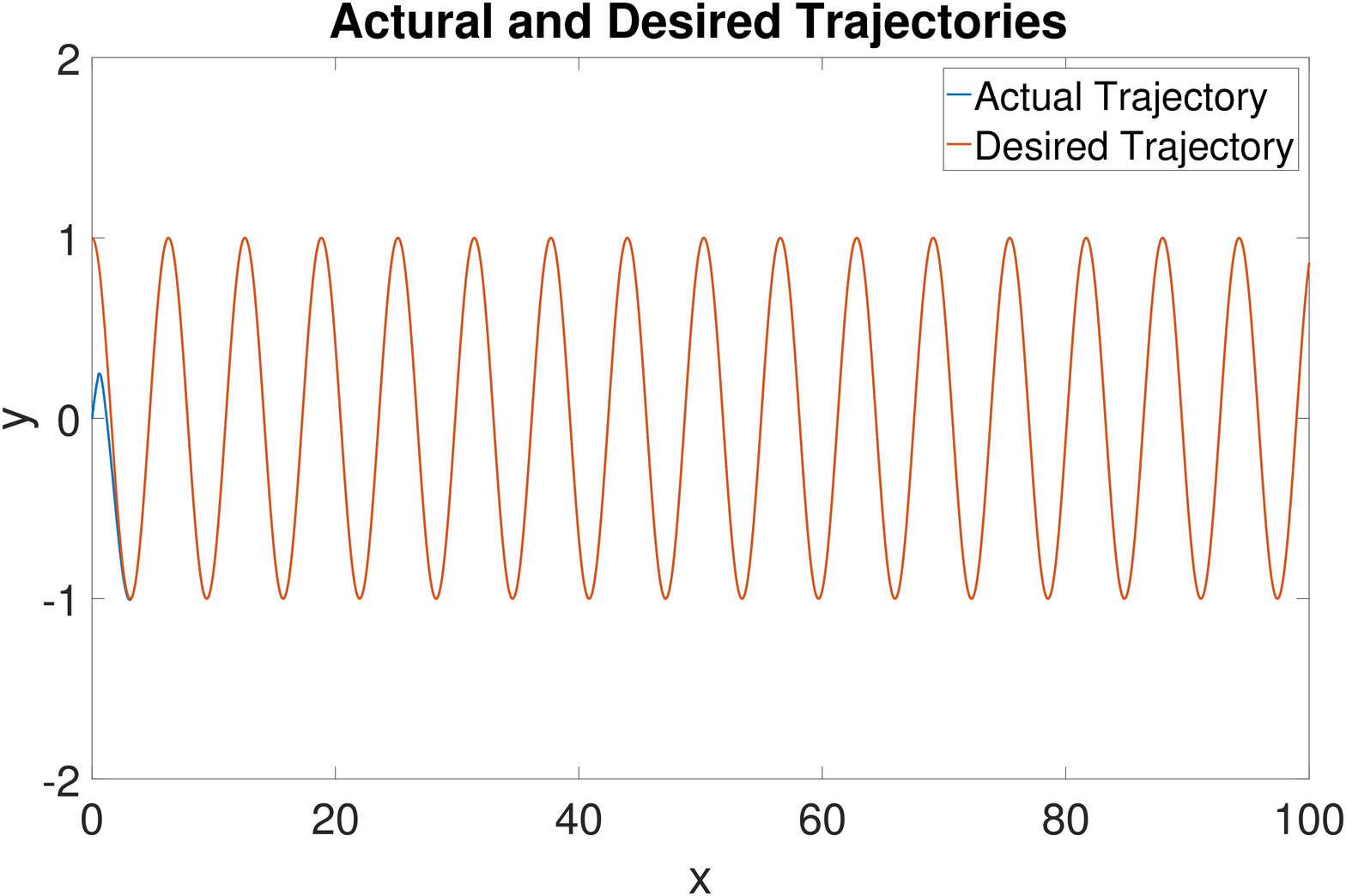}
	\caption{Actual and Desired Trajectories}
	\label{fig:Traj Sec 2}
\end{figure}

\subsection{Simulation results for Section \ref{BF control}}
We consider the desired goal location for the system \eqref{sin int} as $\bm \tau = \begin{bmatrix}10 & 20\end{bmatrix}^T$ and the obstacle at $\bm o = \begin{bmatrix}4 & 6\end{bmatrix}^T$ of radius 1. We use the safe distance $d_c = 2$. Figure \ref{fig:BF DO} shows the path of the vehicle. Figure \ref{fig:BF DG} shows the distance of the vehicle from its goal location, which becomes $0$ in finite time. 
 \begin{figure}[h]
 	\centering
 	\includegraphics[width=0.8\columnwidth,clip]{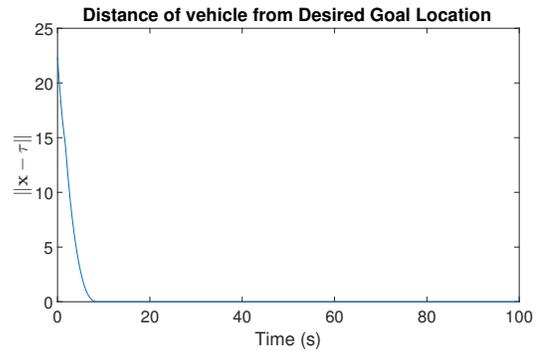}
 	\caption{Distance of the vehicle from desired goal location.}
 	\label{fig:BF DG}
 \end{figure}

\begin{figure}[h]
	\centering
	\includegraphics[width=1\columnwidth,clip]{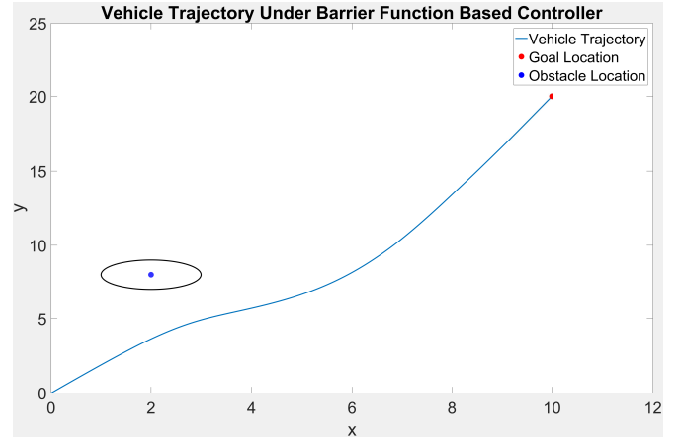}
	\caption{Vehicle Path in presence of Obstacle.}
	\label{fig:BF DO}
\end{figure}

\subsection{Simulation results for Section \ref{LCS}}
We consider 4 states for the system \eqref{lin-sys}. The desired location is chosen as $\bm x_d = [5, 0, 0, 0]^T$. Figure \ref{fig:LCS Traj} shows the trajectory of the system. Again, it can be seen from the figure that the system converges to its desired state in finite time. 
% Figure \ref{fig:LCS Err} shows the deviation of the states from the desired location $\bm x_d$. 

\begin{figure}[h]
	\centering
	\includegraphics[width=1\columnwidth,clip]{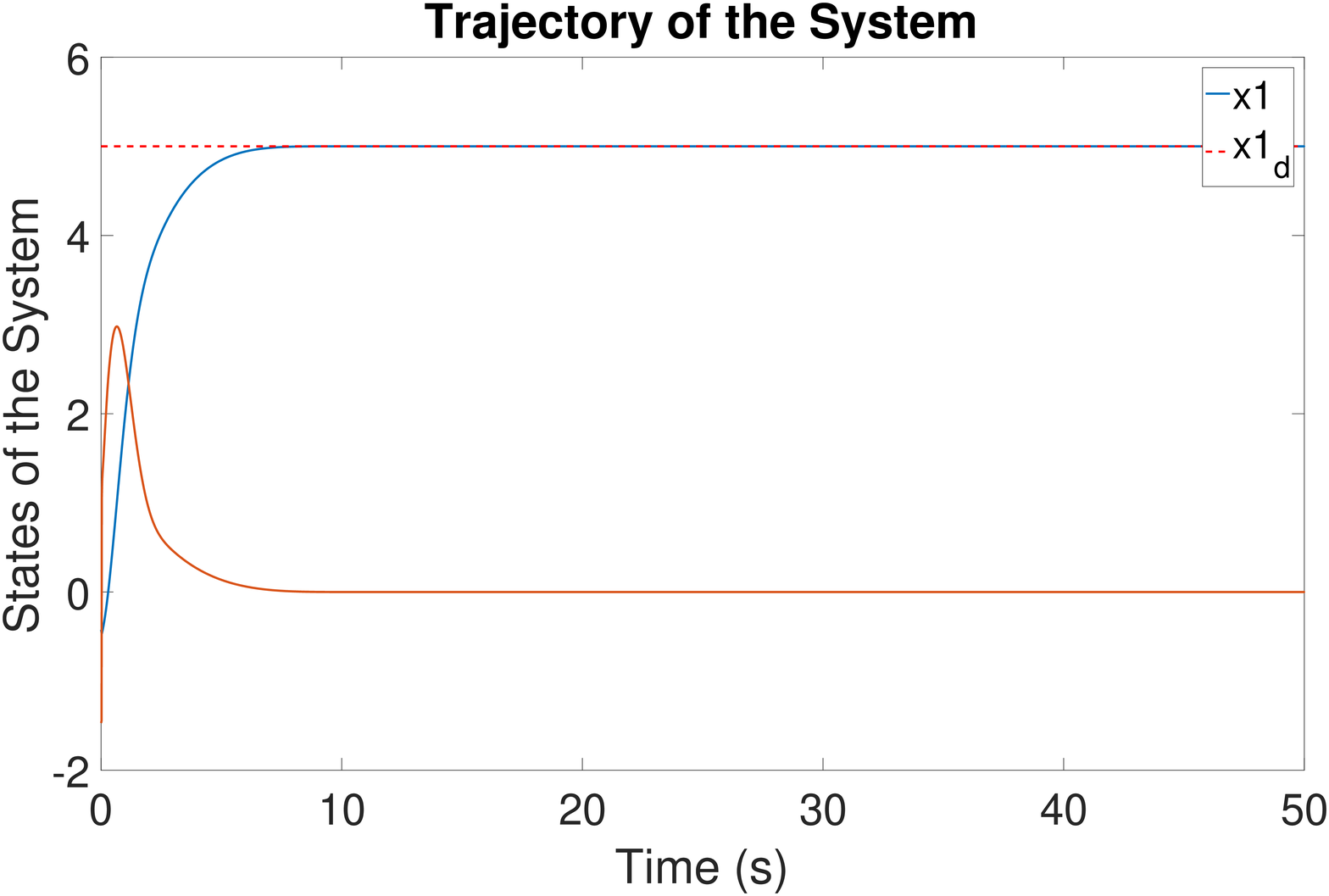}
	\caption{Time Evolution of the system states with time.}
	\label{fig:LCS Traj}
\end{figure}

% \begin{figure}[h]
% 	\centering
% 	\includegraphics[width=0.8\columnwidth,clip]{}
% 	\caption{Deviation of state $x_1$ from $x_1^d$.}
% 	\label{fig:LCS Err}
% \end{figure}

\section{Conclusions and Future Work}\label{Conclusions}
We presented new geometric conditions for scalar systems in terms the system dynamics evaluated to establish finite-time stability. We demonstrated the utility of the condition through 2 examples where a vector-field based controller is designed for finite-time convergence. We also presented a novel method of designing finite-time Barrier function based control law for obstacle avoidance. Finally, we presented a novel continuous finite-time feedback controller for a general class of linear controllable systems. Our current research focuses on Hybrid and Switched systems. Therefore, in future, we would like to devise condition equivalent to Branicky's condition for Switching systems to be finite-time stable under arbitrary switching. Also, we would also like to expand our collection of finite-time controllers for a general class of non-linear systems. 

\bibliographystyle{IEEEtran}
\bibliography{myreferences}

\end{document}